\theoremstyle{plain}
\newtheorem{theo}{Theorem}[section]
\newtheorem{prop}[theo]{Proposition}
\newtheorem{lemm}[theo]{Lemma}
\theoremstyle{definition}
\theoremstyle{remark}
\newtheorem*{rema}{Remark}
\numberwithin{equation}{section}
\DeclareMathOperator{\tr}{tr}
\DeclareMathOperator{\re}{Re}
\DeclareMathOperator{\Res}{Res}
\DeclareMathOperator{\Vol}{vol}
\title{Spectral geometry of surfaces with curved conic singularities.}
\author{Asilya Suleymanova}
\date{November 2017}
\begin{document}
\maketitle

\begin{abstract}
Let $(M,g)$ be a surface with Riemannian metric and curved conic singularities. More precisely, a neighbourhood of a singularity is isometric to $(0,1)\times S^1$ with metric $g_{\text{conic}}=dr^2+f(r)^2d\theta^2, r\in(0,1)$. We study the spectral geometry of $(M,g)$ using the heat trace expansion. We express the first few terms in the expansion through the geometry of the singularities. The constant term contains information about the angle at the tip of the cone. The next term, $bt^{1/2}$, is expressed through the curvature and the angle at the tip of the cone.
\end{abstract}

\tableofcontents

\section{Introduction}

In the prequel paper \cite[Lemma~2.1, Theorem~2.2]{S2}, we studied the spectral geometry of surfaces with conic singularities, where $g_{\text{conic}}=dr^2+r^2d\theta^2, r\in(0,1)$. In this case the cone near the singularity is flat and there is one term in the heat trace expansion that contains information about the singularity. It is the constant term, which is expressed through the angle at the tip of the cone. In this article we study the spectral geometry in the more general case, when the cone near the singularity may be non-flat. As expected there is more then one term in the heat trace expansion which contains information about the singularity. The constant term, as in the flat case, contains information about the angle at the tip of the cone. The next term, $bt^{1/2}$, is expressed through the curvature of the cone and the angle.

Let $(M,g)$ be a non-complete smooth surface with Riemannian metric that possesses a curved conic singularity. By this we mean that there is an open set $U$ such that $M\setminus U$ is smooth compact surface with closed smooth boundary $(N,g_N)$. Furthermore, $U$ is isometric to $(0,1)\times N$ and the metric on $(0,1)\times N$ is

\begin{align}\label{conic metric}
g_{\text{conic}}=dr^2+f(r)^2d\theta^2, \;\;\; r\in(0,1).
\end{align}
Above $f(r)\in C^\infty([0,1))$, $f(0)=0$. From now on we refer to such singularity simply as conic singularity.

The metric on $(M,g)$ determines the Laplace-Beltrami operator $\Delta$. It is a nonnegative symmetric operator in $L^2(M)$ and hence admits self-adjoint extensions. Since $(M,g)$ is incomplete the Laplace-Beltrami operator may have many self-adjoint extensions, in this article we consider the Friedrichs extension. For a manifold with isolated conic singularities the discreteness of this extension is well known, see \cite[Section 3 and 4]{Ch}. It is also known that the heat operator $e^{-t\Delta}$ for $t>0$ is a trace class operator and there exists an asymptotic expansion as $t\to0+$, see \cite[Section 7]{BS2}. The main result of this article is the following theorem, where we give precise description of the terms in the expansion and compute the contribution of the singularity to the first terms.

\begin{theo}\label{heat trace on surface}
Let $(M,g)$ be a surface with $k$ conic singularities with the metric near the $i$-th singularity $g_{\text{conic}}=dr^2+f_i(r)^2d\theta^2, r\in(0,1)$. If $\Delta$ is the Friedrichs extension of the Laplace-Beltrami operator on $(M,g)$, then we have the asymptotic expansion
\begin{equation}
\begin{split}
\tr e^{-t\Delta}
\sim_{t\to0+}
\frac{1}{4\pi}\sum_{j=0}^\infty a_{j} t^{j-1}
+\sum_{j=0}^\infty b_{\frac{j}{2}} t^{\frac{j}{2}}
+\sum_{j=0}^\infty c_j t^j\log t,
\end{split}
\end{equation}

where $a_0=\Vol(M)$ and $c_0=0$. Furthermore
$$
b_0
=\frac{1}{12}\sum_{i=1}^{k}\left(\frac{1}{\sin\alpha_i}-\sin\alpha_i\right)
$$
and
$$
b_\frac{1}{2}
=\frac{5}{96\sqrt{\pi}}\sum_{i=1}^k\kappa_i\cot\alpha_i.
$$
Above $\alpha_i$ is the opening angle at the tip of the cone near the $i$-th the singularity and $\kappa_i$ is the curvature of the curve generating the cone near the $i$-th singularity, i.e. $\kappa_i=f_i''(0)$.
\end{theo}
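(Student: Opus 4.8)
The plan is to localize the heat trace near the singularities, reduce each local contribution to that of an exact model cone, and then compute the model contribution by separation of variables and $\zeta$-regularized summation over Fourier modes.

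First I would localize. Fix smooth cut-off functions $\phi_i$ supported in the collar $U_i\cong(0,1)\times N$ and equal to $1$ near the $i$-th tip, and $\psi=1-\sum_i\phi_i$. Using Duhamel's principle together with the exponential off-diagonal decay of the heat kernel (the principle of not feeling the boundary; cf. \cite[Sections 3--4]{Ch} and \cite[Section 7]{BS2}) one obtains
\[
\tr e^{-t\Delta}=\tr\bigl(\psi\,e^{-t\Delta_0}\bigr)+\sum_{i=1}^k\tr\bigl(\phi_i\,e^{-t\Delta_{f_i}}\bigr)+O(t^\infty),
\]
where $\Delta_0$ is the Laplacian of a smooth closed surface agreeing with $(M,g)$ on $\operatorname{supp}\psi$, and $\Delta_{f_i}$ is the Friedrichs Laplacian of the model cone $C_{f_i}=\bigl((0,1)\times N,\ dr^2+f_i(r)^2d\theta^2\bigr)$ with, say, Dirichlet condition at $r=1$. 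The first term has the classical Minakshisundaram--Pleijel expansion $\tfrac1{4\pi}\sum_j\bigl(\int_M\psi\,u_j\bigr)t^{j-1}$, with only integer powers of $t$; hence the half-integer powers (in particular $b_{1/2}$), the logarithmic terms, and the conic contribution $b_0$ to the constant term all originate from the tips. Since these non-standard coefficients are local at the tip, $b_0$ and $b_{1/2}$ depend only on the $2$-jet of $f_i$ at $0$, so one may replace $f_i$ by any convenient function with the same $f_i'(0)=\sin\alpha_i$ and $f_i''(0)=\kappa_i$.

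Next I would analyze the model cone $C_f$. Decomposing $L^2(C_f)=\bigoplus_{n\in\mathbb Z}L^2\bigl((0,1),f\,dr\bigr)e^{in\theta}$, the operator $\Delta_f$ acts on the $n$-th summand as $\ell_n=-\partial_r^2-\tfrac{f'}{f}\partial_r+\tfrac{n^2}{f^2}$, and the unitary $u\mapsto f^{1/2}u$ turns it into the Schr\"odinger operator $-\partial_r^2+\tfrac{n^2}{f^2}+q$ with $q=\tfrac12\tfrac{f''}{f}-\tfrac14\bigl(\tfrac{f'}{f}\bigr)^2$. Inserting $f(r)=(\sin\alpha)\,r+\tfrac{\kappa}{2}r^2+O(r^3)$ gives near $r=0$
\[
-\partial_r^2+\frac{\nu_n^2-\tfrac14}{r^2}+\frac{e_n}{r}+O(1),\qquad \nu_n=\frac{|n|}{\sin\alpha},\qquad e_n=\frac{\kappa}{4\sin\alpha}-\frac{\kappa\,n^2}{\sin^3\alpha}.
\]
This is a regular-singular Sturm--Liouville operator; the Br\"uning--Seeley resolvent expansion for such operators \cite[Section 7]{BS2} yields the asserted form of the expansion, in particular $c_0=0$. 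For the constant term, dropping $e_n/r$ and the bounded remainder leaves the exact flat cone of total angle $2\pi\sin\alpha$, and the $\zeta$-regularized mode sum — exactly as carried out in the prequel \cite[Lemma~2.1, Theorem~2.2]{S2} — produces $\tfrac1{12}\bigl(\tfrac1{\sin\alpha_i}-\sin\alpha_i\bigr)$ per tip, giving $b_0$. For $b_{1/2}$ I would write $\ell_n=\ell_n^{\mathrm{flat}}+V_n$ with $V_n=e_n/r+(\text{bounded})$ near $0$ and expand by Duhamel: $\tr e^{-t\ell_n}=\tr e^{-t\ell_n^{\mathrm{flat}}}-t\,\tr\bigl(V_n e^{-t\ell_n^{\mathrm{flat}}}\bigr)+O(t)$, the second-order term being $O(t)$ and $O(\kappa^2)$, so only the first order and only the $e_n/r$ part of $V_n$ matter at order $t^{1/2}$. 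Using the explicit Friedrichs heat kernel $\tfrac{\sqrt{rr'}}{2t}e^{-(r^2+r'^2)/4t}I_{\nu_n}(rr'/2t)$ of $\ell_n^{\mathrm{flat}}$ and the substitution $u=r^2/2t$, the relevant piece of $\tr\bigl(V_n e^{-t\ell_n^{\mathrm{flat}}}\bigr)$ becomes $\tfrac{e_n}{2\sqrt{2t}}\int_0^\infty e^{-u}I_{\nu_n}(u)u^{-1/2}\,du$, interpreted as the finite part at the simple pole $s=\tfrac12$ of $\int_0^\infty e^{-u}I_{\nu_n}(u)u^{s-1}du$. The pole would contribute a $t^{1/2}\log t$ term with coefficient proportional to $\sum_{n\in\mathbb Z}^{\mathrm{reg}}e_n=\tfrac{\kappa}{4\sin\alpha}\sum_n^{\mathrm{reg}}1-\tfrac{\kappa}{\sin^3\alpha}\sum_n^{\mathrm{reg}}n^2$, which vanishes since $\sum_n^{\mathrm{reg}}1=1+2\zeta(0)=0$ and $\sum_n^{\mathrm{reg}}n^2=2\zeta(-2)=0$ — consistent with the absence of half-integer logarithmic terms. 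The remaining finite part, summed over $n$ against $e_n$ in the $\zeta$-regularized sense (using the large-$\nu$ asymptotics of the Bessel integral together with $\zeta(0),\zeta(-2),\zeta(-4)$), is the elementary but laborious computation that yields $\tfrac{5}{96\sqrt\pi}\,\kappa_i\cot\alpha_i$ per tip.

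The routine analytic reductions of the first two paragraphs are standard once \cite{Ch} and \cite{BS2} are invoked. The main obstacle will be the last step: handling the Mellin variable $s$ and the mode index $n$ simultaneously — i.e. the joint meromorphic continuation underlying the Br\"uning--Seeley formalism — so as to extract the correct finite parts after the cancellation of the would-be $t^{1/2}\log t$ term. It is precisely this bookkeeping that produces the exact constants $\tfrac1{12}$ and $\tfrac5{96\sqrt\pi}$, and where errors are easiest to make.
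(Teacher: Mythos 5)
Your route is viable and is genuinely different in its technical machinery from the paper's, even though the two share the same skeleton (localization, reduction to the model cone, Fourier-mode decomposition, zeta-regularized mode sums). The paper never works with the heat kernel until the last line: it studies $\tr(\varphi(\Delta+z^2)^{-d})$, writes the transformed operator as $-\partial_r^2+r^{-2}A(r)$, and feeds the resolvent kernel $\sigma(r,\zeta)$ into the Br\"uning--Seeley Singular Asymptotics Lemma, which produces the three families of terms together with closed formulas for their coefficients; heat coefficients are recovered at the very end via $b_{l/2}=\frac{(d-1)!}{\Gamma(d+l/2)}b^\rho_l$. Your first-order Duhamel term $-t\tr\bigl(V_n e^{-t\ell_n^{\mathrm{flat}}}\bigr)$ is the heat-trace shadow of the paper's identity $\partial_r\tr(\Delta+1)^{-d}|_{r=0}=-d\,\tr\bigl(r^{-1}A'(0)(\Delta+1)^{-d-1}\bigr)|_{r=0}$, your finite part of $\int_0^\infty e^{-u}I_{\nu_n}(u)u^{s-1}du$ at $s=\tfrac12$ plays the role of the paper's regularized value at $s=-1$ of the Mellin transform of $rI_\nu(r)K_\nu(r)$ (Proposition~\ref{one dimensional kernel}, via Oberhettinger), and the cancellations via $\zeta(-2)=0$ and the survival of $\zeta(-1)=-\tfrac1{12}$ are identical. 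What the SAL route buys is that the delicate interchange of the mode sum with the small-parameter asymptotics, and the identification of the regularized integrals with genuine asymptotic coefficients, are done once and for all by a cited theorem; what your route buys is explicitness and independence from that formalism -- at the price of having to justify those interchanges yourself.

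That said, there are three concrete gaps. First, the decisive step -- the evaluation producing $\tfrac{5}{96\sqrt\pi}$ -- is asserted ("elementary but laborious") rather than performed, and you correctly identify it as the place where errors are easiest to make; since this constant is the main content of the theorem, the proposal is a program rather than a proof until that finite-part-plus-mode-sum computation is written out and the uniformity in $n$ needed to legitimize $\sum_n^{\mathrm{reg}}$ is established (this is precisely what the SAL supplies in the paper). Second, a convention mismatch: you take $f_i'(0)=\sin\alpha_i$, whereas the paper's computation of $b_{1/2}$ uses $\tan\alpha_i=f_i'(0)$; the calculation naturally yields $b_{1/2}\propto\kappa_i/f_i'(0)$, so with your convention you would obtain $\kappa_i/\sin\alpha_i$ rather than the stated $\kappa_i\cot\alpha_i$ (the paper's $b_0$ formula is, confusingly, consistent with your convention, so this tension must be resolved before the answer can be matched literally). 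Third, a sign issue in the $1/r$ coefficient: you find $e_n=+\tfrac{\kappa}{4f'(0)}-\tfrac{\kappa n^2}{f'(0)^3}$, while the paper's formula (\ref{A'(0)}) has $-\tfrac14\tfrac{f''(0)}{f'(0)}$ for the scalar part. Since the $n^2$ part is annihilated by $\zeta(-2)=0$, it is exactly this scalar term (paired with $\zeta(-1)$ and the $n=0$ mode) that determines the sign of $b_{1/2}$; a direct Taylor expansion of $\tfrac12\tfrac{f''}{f}-\tfrac14\bigl(\tfrac{f'}{f}\bigr)^2$ supports your sign, so you should reconcile this discrepancy explicitly rather than aim blindly for the stated constant.
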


Note that the surface $(M,g)$ does not have a boundary. In the case of the smooth compact surface without a boundary, half power order terms do not appear in the expansion. A very interesting phenomenon is that in the case of the surface with conic singularities in general there is a non-zero half power order coefficient $b_{\frac12}$.

\section{Regular Singular asymptotics}

In this section we briefly outline the properties of the regular singular operators in order to apply the Singular Asymptotic Lemma, \cite[p.~372]{BS2}, and obtain the resolvent trace expansion. We then use the results of this section to prove Theorem~\ref{heat trace on surface}.

Consider an operator
$$
L:=-dr^2+r^{-2}A(r), \;\;\;\;\;r>0,
$$
where $A(r)$ is a family of unbounded operators in a Hilbert space $H$ satisfying the conditions (A1)--(A6) in \cite[p.~373]{BS2}. Denote the Friedrichs extension of $L$ by the same letter. It is shown in \cite[pp. 400--409]{BS} that for any function $\varphi\in C^\infty_c(\mathbb{R})$ the operator $\varphi(r)(L+z^2)^{-d}$ is trace class for $d>1$. By \cite[Appendix: A Trace Lemma]{BS2},

\begin{lemm}
The operator $\varphi(r)(L+z^2)^{-d}$ has a kernel, we denote it by $\left[\tr_{L^2(N)}\left(\varphi(r)(L+z^2)^{-d}\right)\right](r_1,r_2)$such that 
$$
\varphi(r)(L+z^2)^{-d}f(r)=\int_{-\infty}^\infty\left[\tr_{L^2(N)}\left(\varphi(r)(L+z^2)^{-d}\right)\right](r,q)f(q)dq.
$$
Furthermore
$$
\tr\left(\varphi(r)(L+z^2)^{-d}\right)=\int_{-\infty}^\infty\left[\tr_{L^2(N)}\left(\varphi(r)(L+z^2)^{-d}\right)\right](q,q)dq.
$$
\end{lemm}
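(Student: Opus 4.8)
The statement is the specialization to $T_z:=\varphi(r)(L+z^2)^{-d}$, $d>1$, of the Trace Lemma in \cite[Appendix]{BS2}, and I would prove it by exhibiting $T_z$ as a product of Hilbert--Schmidt operators on $L^2(\mathbb{R}_+,L^2(N))\cong L^2(\mathbb{R}_+)\otimes L^2(N)$. The two elementary ingredients are: (i) an operator $S$ on $L^2(\mathbb{R},H)$ is Hilbert--Schmidt iff it is represented by a measurable family of Hilbert--Schmidt operators $s(r_1,r_2)$ on $H$ with $\iint\|s(r_1,r_2)\|_{\mathrm{HS}(H)}^2\,dr_1\,dr_2=\|S\|_{\mathrm{HS}}^2<\infty$; and (ii) the composition $S_1S_2$ of two such operators is represented by the operator-valued kernel $K(r_1,r_2)=\int s_1(r_1,q)\,s_2(q,r_2)\,dq$, which lies in the trace ideal $\mathcal L^1(H)$ for a.e.\ $(r_1,r_2)$, acts on $L^2(\mathbb{R},H)$ by integration against $K$, and satisfies $\tr(S_1S_2)=\iint\tr_H\!\bigl(s_1(r,q)s_2(q,r)\bigr)\,dq\,dr=\int\tr_H K(r,r)\,dr$ as soon as $K$ is known to be continuous near the diagonal with values in $\mathcal L^1(H)$. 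The scalar kernel in the statement is then $[\tr_{L^2(N)}(T_z)](r_1,r_2):=\tr_{L^2(N)}K(r_1,r_2)$, which is precisely the kernel of the partial trace of $T_z$ over the $N$-factor, so both displayed identities follow.

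To obtain the factorization, fix $d_1,d_2>\tfrac12$ with $d_1+d_2=d$ and a cutoff $\psi\in C_c^\infty(\mathbb{R})$ with $\psi\equiv1$ on a neighbourhood of $\operatorname{supp}\varphi$. Applying the Hilbert--Schmidt form of the estimates of \cite[pp.~400--409]{BS} (equivalently, the trace-class bound there applied to the nonnegative operators $(L+z^2)^{-d_1}\varphi^2(L+z^2)^{-d_1}$ and $(L+z^2)^{-d_2}\psi^2(L+z^2)^{-d_2}$, whose traces are finite because $2d_i>1$ and $\varphi^2,\psi^2\in C_c^\infty$) shows that $\varphi(r)(L+z^2)^{-d_1}$ and $\psi(r)(L+z^2)^{-d_2}$ are Hilbert--Schmidt. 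Writing
\begin{equation}
T_z=\bigl(\varphi(r)(L+z^2)^{-d_1}\bigr)\bigl(\psi(r)(L+z^2)^{-d_2}\bigr)+\varphi(r)(L+z^2)^{-d_1}\,(1-\psi(r))\,(L+z^2)^{-d_2},
\end{equation}
the first summand is a product of two Hilbert--Schmidt operators and hence has all the asserted properties by (i)--(ii). In the second summand the cutoffs $\varphi$ and $1-\psi$ have disjoint supports, so the parametrix for the regular singular operator $L$ provided by conditions (A1)--(A6) shows that $\varphi(r)(L+z^2)^{-d_1}(1-\psi(r))$ is smoothing, with an operator-valued kernel that is jointly smooth in $(r_1,r_2)$ and rapidly decaying with trace-class fibres; composing on the right with $(L+z^2)^{-d_2}$ preserves these properties. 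Adding the two contributions produces the operator-valued kernel $K(r_1,r_2)$ of $T_z$, establishing the kernel representation, the trace-class property of the fibres, and the trace formula.

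The main obstacle is the last, ``diagonal restriction'' point: a priori the Hilbert--Schmidt kernels $s_1,s_2$, and hence $K$, are only defined almost everywhere, so $K(r,r)$ is not literally meaningful and the identity $\tr T_z=\int\tr_{L^2(N)}K(r,r)\,dr$ needs justification. To handle it one must upgrade regularity in the $r$-variable: choosing $d_1,d_2$ slightly larger than $\tfrac12$ and using the mapping properties of $(L+z^2)^{-d_i}$ on the regular singular Sobolev scale associated to $L$, together with a one-dimensional Sobolev embedding in $r$, one may take $s_1,s_2$ (and the off-diagonal remainder) to be continuous in $(r_1,r_2)$ with values in $\mathrm{HS}(L^2(N))$, and then $K$ is continuous with values in $\mathcal L^1(L^2(N))$. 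This regularity statement is exactly what \cite[Appendix]{BS2} isolates; once it is in place, the trace identity is the standard computation for products of Hilbert--Schmidt operators quoted in (ii).
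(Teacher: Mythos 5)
Your proof is correct and follows the same route the paper takes implicitly: the paper gives no argument of its own beyond citing the trace-class result of \cite[pp.~400--409]{BS} and the Trace Lemma in the appendix of \cite{BS2}, and your Hilbert--Schmidt factorization together with the continuity-near-the-diagonal upgrade is precisely the content of that cited lemma. The only cosmetic point is that the Hilbert--Schmidt property of $\varphi(r)(L+z^2)^{-d_1}$ is most directly read off from $\tr\left(\varphi(L+z^2)^{-2d_1}\varphi\right)<\infty$, which is what the cited estimates give for $2d_1>1$, rather than from the operator you wrote down.
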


Let $L_r$ be a scaled operator as in \cite[(4.12)]{BS2}.

We denote $\sigma_r(r,\zeta):=\left[\tr_H(L_r+\zeta^2/r^2)^{-d}\right](r,r)$. By \cite[Lemma~4.9]{BS2}, we have the scaling property

\begin{align}\label{scaling property}
\sigma_1(r,\zeta)=r^{2d-1}\varphi(r)\left[\tr_H(L_r+\zeta^2)^{-d}\right](1,1).
\end{align}

We assume that 
\begin{equation}\label{sigma expansion series}
\sigma(r,\zeta):=\sigma_1(r,\zeta)\sim_{\zeta\to\infty}\sum_{l=0}^\infty\zeta^{-2d+2-l}\sigma_l(r).
\end{equation}


\begin{prop}\label{general resolvent expansion}
Let $\varphi(r)$ be a smooth function with compact support in $\mathbb{R}$ and $\varphi=1$ near $r=0$. We have the following expansion as $z\to\infty$
\begin{equation}\label{resolvent trace expansion}
\begin{split}
\tr\left(\varphi(r)(L+z^2)^{-d}\right)
\sim
\sum_{l=0}^\infty a^\rho_l z^{-2d+2-l}
+\sum_{l=0}^\infty b^\rho_l z^{-2d-l}
+\sum_{l=0}^\infty c^\rho_l z^{-2d-l}\log z,
\end{split}
\end{equation}
where the interior terms
\begin{align*}
a^\rho_l
=\fint_0^{\infty}\varphi(r)r^{-2d+2-l}\sigma_{l}(r)dr,
\end{align*}
the singular terms
\begin{align*}
b^\rho_l
=\frac{1}{(2d-1+l)!}\fint_0^{\infty}\zeta^{2d-1+l}\partial^{2d-1+l}_r\left(\sigma(r,\zeta)\right)|_{r=0}d\zeta,
\end{align*}
and the logarithmic terms
\begin{align*}
c^\rho_l
=\frac{1}{(2d-3+2l)!}\partial^{2d-3+2l}_r(\sigma_l(r))|_{r=0}.
\end{align*}
\end{prop}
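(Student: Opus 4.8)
The plan is to bring the left-hand side of \eqref{resolvent trace expansion} into a one-dimensional integral of the shape handled by the Singular Asymptotics Lemma of \cite[p.~372]{BS2}, to verify the hypotheses of that lemma, and then to read off the three families of coefficients. First, by the Trace Lemma,
\[
\tr\left(\varphi(r)(L+z^2)^{-d}\right)=\int_0^\infty\varphi(r)\left[\tr_H\left((L+z^2)^{-d}\right)\right](r,r)\,dr .
\]
The scaling property \eqref{scaling property}, equivalently \cite[Lemma~4.9]{BS2}, gives $[\tr_H((L+z^2)^{-d})](r,r)=r^{2d-1}[\tr_H((L_r+r^2z^2)^{-d})](1,1)$, so that $\varphi(r)[\tr_H((L+z^2)^{-d})](r,r)=\sigma(r,rz)$ and hence
\[
\tr\left(\varphi(r)(L+z^2)^{-d}\right)=\int_0^\infty\sigma(r,rz)\,dr .
\]
This is precisely an integral $\int_0^\infty g(r,r/\varepsilon)\,dr$ with $\varepsilon=1/z$ of the type to which the Singular Asymptotics Lemma applies, the $r$-support of $g=\sigma$ being compact because of the cutoff $\varphi$.

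Next I would verify the input hypotheses of the Singular Asymptotics Lemma for $\sigma(r,\zeta)$: smoothness up to $r=0$ and $\zeta=0$; the existence of the asymptotic expansion \eqref{sigma expansion series} as $\zeta\to\infty$ together with the corresponding expansions of all $r$-derivatives, uniformly on compact $r$-intervals; and polynomial bounds on $\sigma$ and its derivatives. These follow from the structure theory of regular singular operators satisfying (A1)--(A6), developed in \cite[pp.~400--409]{BS} and \cite[Section~4]{BS2}: the resolvent kernel is smooth away from $r=0$, the scaled family $L_r$ depends smoothly on $r$ up to $r=0$, and $\sigma(r,\zeta)=r^{2d-1}\varphi(r)[\tr_H((L_r+\zeta^2)^{-d})](1,1)$ in particular vanishes to order $2d-1$ in $r$ at $r=0$.

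With the hypotheses in hand, the Singular Asymptotics Lemma yields the expansion \eqref{resolvent trace expansion} and prescribes its coefficients. Inserting \eqref{sigma expansion series} into $\int_0^\infty\sigma(r,rz)\,dr$, performing the substitution $\zeta=rz$, and taking the finite part of the resulting (generically divergent) $r$-integral at $r=0$ produces the interior terms $a^\rho_l=\fint_0^\infty\varphi(r)r^{-2d+2-l}\sigma_l(r)\,dr$. Taylor-expanding $\sigma(r,\zeta)$ in $r$ at $r=0$ --- whose first non-vanishing coefficient is the $(2d-1)$-st, by the remark above --- and integrating term by term produces the singular terms, the $(2d-1+l)$-st Taylor coefficient contributing $b^\rho_l=\frac{1}{(2d-1+l)!}\fint_0^\infty\zeta^{2d-1+l}\partial^{2d-1+l}_r\left(\sigma(r,\zeta)\right)|_{r=0}\,d\zeta$. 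At the resonant orders where one of these regularized integrals develops a simple logarithmic divergence, the lemma furnishes the terms $c^\rho_l z^{-2d-l}\log z$, and extracting the residue, matched with the appropriate Taylor datum, gives $c^\rho_l=\frac{1}{(2d-3+2l)!}\partial^{2d-3+2l}_r(\sigma_l(r))|_{r=0}$.

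\textbf{Main obstacle.} The principal difficulty is the verification of the hypotheses of the Singular Asymptotics Lemma, above all the uniformity in $r$ (including up to $r=0$) and the differentiability of the expansion \eqref{sigma expansion series}, which rest on precise resolvent estimates for regular singular operators; together with the careful bookkeeping that matches the abstract conclusion of the lemma with the exact combinatorial constants $1/(2d-1+l)!$ and $1/(2d-3+2l)!$ and pins down at which orders the logarithmic terms occur. Once that lemma is available, the remaining computation is routine.
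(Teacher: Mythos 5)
Your proposal follows essentially the same route as the paper: both reduce the trace to the one-dimensional integral $\fint_0^\infty\sigma(r,rz)\,dr$ via the Trace Lemma and the scaling property, invoke the Singular Asymptotics Lemma of Br\"uning--Seeley (whose hypotheses the paper delegates to \cite[Theorem~5.2]{BS2}), and then use the $r^{2d-1}$ vanishing from \eqref{scaling property} to kill the Taylor coefficients of order $<2d-1$, which is exactly what shifts the singular series to start at $z^{-2d}$. The proposal is correct and, if anything, slightly more explicit than the paper about why the integrand is $\sigma(r,rz)$ and which hypotheses of the lemma need checking.
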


\begin{rema}
The subscript $\rho$ indicates that these are coefficients in the resolvent expansion. The coefficients in the heat trace expansion we denote without the subscript.
\end{rema}

\begin{proof}
By \cite[Theorem~5.2]{BS2}, we can apply the Singular Asymptotics Lemma to the resolvent kernel $\sigma(r,\zeta)$ supported near $r=0$ to get the expansion of the trace of the resolvent near $r=0$. We use (\ref{sigma expansion series}) to obtain

\begin{equation}\label{preliminary expansion}
\begin{split}
\tr\left(\varphi(r)(L+z^2)^{-d}\right)
=&\fint_0^{\infty}\varphi(r)\sigma(r,\zeta)dr\\
\sim
&\sum_{l=0}^{\infty}\fint_0^{\infty}\sigma_{l}(r)(rz)^{-2d+2-2l}\varphi(r)dr\\
&+\sum_{l=0}^{\infty}z^{-l-1}\frac{1}{l!}\fint_0^{\infty}\zeta^l\partial^l_r\left(\sigma(r,\zeta)\right)|_{r=0}d\zeta\\
&+\sum_{l=0}^{\infty}z^{-2d-2l}\log z\frac{\partial^{2d-3+2l}_r\left(\sigma_{l}(r)\right)|_{r=0}}{(2d-3+2l)!}.
\end{split}
\end{equation}

By the scaling property (\ref{scaling property}), all derivatives in the second sum in (\ref{preliminary expansion}) for $l<2d-1$ are zero. Hence the first non zero summand in the sum appears for $l=2d-1$.
\end{proof}

\section{Computations of the singular terms}

Let $(M,g)$ be a surface with a conic singularity with the metric (\ref{conic metric}). The Laplace-Beltrami operator corresponding to (\ref{conic metric}) is 
$$
-f^{-1}(\partial_r f\partial_r)-f^{-2}\partial^2_\theta.
$$ 
We use the change of dependent variable $u\mapsto f^{1/2}u$ to see that the operator is unitary equivalent to 
$$
\Delta
:=-\partial_r^2-f^{-2}\partial^2_\theta+\frac12\frac{f''}{f}-\frac14\left(\frac{f'}{f}\right)^2.
$$

Let $h(r)\in C^\infty([0,1))$ be such that $f(r)=rh(r)$, $h(0)=f'(0)$ and $2h'(0)=f''(0)$. We obrain
\begin{align*}
\Delta
&=-\partial^2_r+r^{-2}\left(-h^{-2}(r)\partial^2_\theta+\frac12\frac{2rh'(r)+r^2h''(r)}{h(r)}-\frac14\frac{(h(r)+rh'(r))^2}{h^2(r)}\right)\\
&=:-\partial^2_r+r^{-2}A(r).
\end{align*}

To compute the derivative of $A(r)$, we simplify the notation and omit the argument $h=h(r)$
\begin{align*}
A'(r)
=&2h^{-3}h'\partial^2_\theta+\frac12\frac{(2h'+2rh''h'+2rh''+r^2h'''h')h+(2rh'+r^2h'')h'}{h^2}\\
&-\frac14\frac{2(h+rh')(2h'+rh''h')h^2+2hh'(h+rh')^2}{h^4}.
\end{align*}
 
Consequently
\begin{align*}
A(0)=-\frac{1}{(f'(0))^2}\partial^2_\theta-\frac14
\end{align*}
and 

\begin{align}\label{A'(0)}
A'(0)
=\frac{f''(0)}{(f'(0))^3}\partial^2_\theta-\frac14\frac{f''(0)}{f'(0)}.
\end{align}

The next proposition is similar to \cite[Proposition~4.14]{BL}.

\begin{prop}\label{one dimensional kernel}
Let $\nu\geq0, d>1$ and $T_\nu:=-\partial_r^2+r^{-2}\left(\nu^2-\frac14\right)$ be an operator in $L^2(\mathbb{R}_+)$. Denote its Friedrichs extension by the same letter. If $(T_\nu+z^2)^{-d-1}(r,r)$ is the kernel of the resolvent of $T_\nu$, then we have for $\text{max }\{-2-2d,-2-2\nu\}<\re(s)<-1$
$$
\fint_0^\infty r^s(T_\nu+1)^{-d-1}(r,r)dr
=\frac{\Gamma(d+1+\frac{s}{2})\Gamma(-\frac12-\frac{s}{2})\Gamma(\nu+1+\frac{s}{2})}{4\sqrt{\pi}d!\Gamma(\nu-\frac{s}{2})}.
$$
\end{prop}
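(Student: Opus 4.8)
The plan is to reduce the computation to classical Bessel function identities. First I would recall that the Friedrichs extension of $T_\nu=-\partial_r^2+r^{-2}(\nu^2-\frac14)$ has an explicit resolvent kernel built from modified Bessel functions: for the operator $(T_\nu+z^2)^{-1}$ the integral kernel on the diagonal is, up to the standard normalization, $G_\nu(r,r;z)=r\,I_\nu(zr)K_\nu(zr)$ (this is the well-known Green's function of the radial Schrödinger/Bessel operator on $\mathbb{R}_+$, with the Friedrichs boundary condition selecting the solution $I_\nu$ that is $L^2$ at $0$ for $\nu<1$ and more generally the minimal extension). I would set $z=1$ from the start since the proposition only needs that case. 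The higher power $(T_\nu+1)^{-d-1}$ is then obtained by differentiating $(d!)^{-1}(-\partial_{z^2})^d$ applied to the kernel at $z=1$, but it is cleaner to use the integral representation $(T_\nu+1)^{-d-1}=\frac{1}{d!}\int_0^\infty \tau^d e^{-\tau(T_\nu+1)}\,d\tau$ or simply to write the diagonal kernel of $(T_\nu+1)^{-d-1}$ directly via the known formula involving a product of Bessel functions with shifted arguments — in any case the key object is $\int_0^\infty r^s\,[\text{diagonal kernel}]\,dr$.

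Second, I would invoke the Weber–Schafheitlin type integral (Gradshteyn–Ryzhik 6.576.4 or its descendants): integrals of the form $\int_0^\infty t^{\mu} I_\nu(at)K_\nu(at)\,dt$, and more generally with the higher-power resolvent the integrand becomes $t^{\mu}$ times a finite combination of $I_\nu K_\nu$, $I_\nu' K_\nu$, etc., each of which evaluates to a ratio of Gamma functions. Concretely, the classical formula
\begin{align*}
\int_0^\infty t^{s}\, I_\nu(t) K_\nu(t)\, dt
= \frac{\Gamma\!\left(\tfrac{1+s}{2}\right)\Gamma\!\left(-\tfrac{s}{2}\right)\Gamma\!\left(\nu+\tfrac{1+s}{2}\right)}{4\sqrt{\pi}\,\Gamma\!\left(\nu+1-\tfrac{s}{2}\right)}
\end{align*}
(valid in the strip where both the $t\to0$ and $t\to\infty$ behaviour are integrable) is the $d=0$ template, and raising the resolvent power to $d+1$ shifts the exponent $s\mapsto s+2d$ inside the Bessel integral while producing the combinatorial factor $1/d!$ together with the Pochhammer-type product that reassembles into $\Gamma(d+1+\frac{s}{2})/\Gamma(1+\frac{s}{2})$. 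Matching the arguments of the four Gamma functions against the claimed answer $\frac{\Gamma(d+1+\frac{s}{2})\Gamma(-\frac12-\frac{s}{2})\Gamma(\nu+1+\frac{s}{2})}{4\sqrt{\pi}\,d!\,\Gamma(\nu-\frac{s}{2})}$ fixes all normalizations; I would double-check the shift conventions by testing $d=1$ against a direct computation of $-\partial_{z^2}$ of the $d=0$ kernel.

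Third, I would track the convergence conditions carefully. The behaviour of $I_\nu(t)K_\nu(t)$ as $t\to0$ is $\sim \frac{1}{2\nu}t$ for $\nu>0$ (and $\sim -\frac12 t\log t$ for $\nu=0$), while as $t\to\infty$ it decays like $\frac{1}{2t}$; after including the $r^s$ weight and the power-$d$ enhancement of the resolvent, integrability at $0$ requires $\re(s)>-2-2\nu$ and at $\infty$ requires $\re(s)<-1$, matching the stated strip $\max\{-2-2d,-2-2\nu\}<\re(s)<-1$ (the $-2-2d$ bound being the constraint that keeps the higher-power kernel integrable at the origin for small $\nu$). Outside the region of absolute convergence I would interpret $\fint$ as the meromorphic continuation — which is exactly what the $\fint$ notation in the paper means, following the regularized integral in the Singular Asymptotics Lemma — so the Gamma-function formula, being meromorphic in $s$, is the continuation and no separate argument is needed.

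The main obstacle I anticipate is not the evaluation of the Bessel integral, which is classical, but rather pinning down the precise normalization of the Friedrichs resolvent kernel of $T_\nu$ and the exact combinatorial identity that turns the $d$-fold differentiation (or the $\tau^d$ integral) of the $d=0$ kernel into the single ratio $\Gamma(d+1+\frac{s}{2})/(d!\,\Gamma(1+\frac{s}{2}))$. Getting the constant $\frac{1}{4\sqrt\pi}$ and the half-integer shift $-\frac12-\frac{s}{2}$ (rather than $-\frac{s}{2}$ as in the $d=0$ template, because the effective exponent is $s$ but the Bessel integral sees $s+2d$ with a compensating Gamma in the numerator) right requires care; I would verify it by specializing to a case with a known closed form, e.g. $\nu=\frac12$ where $I_{1/2},K_{1/2}$ are elementary, reducing the whole identity to a Beta-function computation that can be checked by hand.
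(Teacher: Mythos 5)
Your approach is essentially the paper's: it likewise starts from the diagonal resolvent kernel $rI_\nu(r\zeta)K_\nu(r\zeta)$, generates $(T_\nu+1)^{-d-1}$ by applying $\frac{1}{d!}\left(-\frac{1}{2\zeta}\partial_\zeta\right)^d$ at $\zeta=1$, and then evaluates the resulting Mellin transform of $I_\nu K_\nu$ from classical tables (Oberhettinger in the paper, Gradshteyn--Ryzhik in your version). One small correction: the $d=0$ template you quote should have $\Gamma\!\left(\nu+\tfrac12-\tfrac{s}{2}\right)$, not $\Gamma\!\left(\nu+1-\tfrac{s}{2}\right)$, in the denominator --- with that fix, the shift $s\mapsto s+1$ coming from the factor $r$ in the diagonal kernel reproduces exactly the stated right-hand side at $d=0$, and the consistency checks you propose would catch this.
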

\begin{proof}
Let $I_\nu(r)$ and $K_\nu(r)$ be the modified Bessel functions. By \cite[Proposition~4.2]{S1}, we obtain

\begin{align*}
\fint_0^\infty r^s(T_\nu+1)^{-d-1}(r,r)dr
=&\frac{1}{d!}\fint_0^\infty r^s\left(-\frac{1}{2\zeta}\frac{\partial}{\partial\zeta}\right)^drI_\nu(r\zeta)K_\nu(r\zeta)|_{\zeta=1}dr\\
=&\frac{1}{d!}\fint_0^\infty r^{s+2d+1}\left(-\frac{1}{2r}\frac{\partial}{\partial r}\right)^drI_\nu(r)K_\nu(r)dr.
\end{align*}
By \cite[p.123]{O},
\begin{align*}
\frac{1}{d!}\fint_0^\infty r^{s+2d+1}\left(-\frac{1}{2r}\frac{\partial}{\partial r}\right)^drI_\nu(r)K_\nu(r)dr\\
=\frac{\Gamma(d+1+\frac{s}{2})\Gamma(-\frac12-\frac{s}{2})\Gamma(\nu+1+\frac{s}{2})}{4\sqrt{\pi}d!\Gamma(\nu-\frac{s}{2})}.
\end{align*}
\end{proof}

Now we derive the expansion of the kernel of the resolvent on the diagonal away from the conic singularity. First we recall the classical theorem for the heat kernel expansion on the diagonal.

\begin{theo}[{\cite[Section III.E]{BGM}} ] \label{local_heat_expansion}
Let $K\subset M$ be any compact set and $p\in K$. There is an asymptotic expansion of the heat kernel along the diagonal
$$
\lVert e^{-t\Delta}(p)-(4\pi)^{-1}\sum_{i=0}^{j}t^{i-1}u_i(p)\rVert\leq C_j(K)t^{j+1},
$$
where $C_j(K)$ is some constant which depends on the compact set $K$. Moreover, $u_0(p)\equiv1$ and all $u_i(p)$ are polynomials on the curvature tensor and its covariant derivatives.
\end{theo}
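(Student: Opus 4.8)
The plan is to follow the classical parametrix construction of Minakshisundaram and Pleijel. Fix $p$ and work inside a geodesic ball around it; write $n=\dim M=2$. First I would seek an approximate heat kernel of the form
\[
H_N(t,p,q)=(4\pi t)^{-n/2}\exp\left(-\frac{d(p,q)^2}{4t}\right)\sum_{i=0}^N t^i u_i(p,q),
\]
where $d(p,q)$ is the Riemannian distance. Substituting this ansatz into the heat equation $(\partial_t+\Delta_q)H=0$ and collecting powers of $t$ produces a recursive system of first-order ODEs — the transport equations — for the functions $u_i(p,\cdot)$ along the radial geodesics emanating from $p$. The zeroth equation forces $u_0(p,q)=\Theta(p,q)^{-1/2}$, where $\Theta$ is the Jacobian determinant of $\exp_p$ in normal coordinates, so in particular $u_0(p,p)=1$; the higher equations are solved by integration along geodesics, which shows each $u_i(p,\cdot)$ is smooth on the geodesic ball.

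Next I would estimate the error. By Duhamel's principle, $e^{-t\Delta}-H_N$ has kernel
\[
e^{-t\Delta}(p,q)-H_N(t,p,q)=-\int_0^t\int_M e^{-(t-s)\Delta}(p,q')\,\bigl(\partial_s+\Delta_{q'}\bigr)H_N(s,q',q)\,dq'\,ds,
\]
and by construction of the $u_i$ the quantity $(\partial_s+\Delta)H_N$ equals $(4\pi s)^{-n/2}e^{-d^2/4s}\,s^N\Delta_q u_N$, which is $O(s^{N-n/2})$ uniformly on compacta. Combining this with the contraction property of $e^{-t\Delta}$ on $L^2$ and Sobolev embedding (to turn the $L^2$ bound into a sup bound, using that the remainder is smooth), one obtains the uniform estimate on compact sets $K\subset M$ in the statement, with $j=N-1$ after adjusting $N$. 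Restricting to the diagonal $q=p$ removes the exponential factor and yields $e^{-t\Delta}(p,p)=(4\pi t)^{-n/2}\sum_{i=0}^j t^i u_i(p,p)+O(t^{j+1-n/2})$, which for $n=2$ is exactly the claimed expansion with $u_0(p,p)\equiv 1$.

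Finally, to see that each $u_i(p,p)$ is a polynomial in the curvature tensor and its covariant derivatives, I would unwind the transport equations in normal coordinates centred at $p$: every $u_i$ is obtained from $u_0=\Theta^{-1/2}$ by finitely many radial integrations and applications of $\Delta$, and the Taylor coefficients of the metric in normal coordinates at $r=0$ are themselves universal polynomials in the curvature and its covariant derivatives (via the Gauss-type expansion $g_{ab}=\delta_{ab}-\tfrac13 R_{acbd}x^c x^d+\cdots$). Hence $u_i(p,p)$ is such a polynomial; alternatively one invokes Gilkey's naturality and Weyl-invariance argument, which forces any universally defined local heat invariant of the appropriate weight to be a curvature polynomial. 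I expect the main obstacle to be the uniformity of the Duhamel remainder estimate — controlling $(\partial_s+\Delta)H_N$ and its convolution with the true heat semigroup carefully enough to produce a genuine sup-norm bound on $K$ with the stated power of $t$ — rather than the bookkeeping-heavy but routine identification of the coefficients.
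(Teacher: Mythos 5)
The paper offers no proof of this statement, only the citation to [BGM, Section~III.E], and your argument is precisely the classical Minakshisundaram--Pleijel parametrix construction carried out there: the Gaussian ansatz, the transport equations giving $u_0=\Theta^{-1/2}$, the Duhamel remainder estimate, and the normal-coordinate identification of the $u_i$ as curvature polynomials. Your proposal is correct and is essentially the same approach as the cited source.
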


Using the Cauchy's differentiation formula, we obtain the expansion of the kernel of the resolvent along the diagonal for $p\in K\subset M$

\begin{equation}
\begin{split}
\left\Vert(\Delta+z^2)^{-d}(p)-
(4\pi)^{-1}\sum_{l=0}^{k}z^{-2d+2-2l}u_l(p)\frac{\Gamma(d+l-1)}{(d-1)!}\right\Vert
\leq \tilde{C}_k(K)z^{-2d-2k}.
\end{split}
\end{equation}

Denote $\sigma(r,\zeta):=\tr_{L^2(S^1)}(\Delta+\zeta^2/r^2)^{-d}$, then

\begin{equation}
\sigma(r,\zeta)\sim_{\zeta\to\infty}\sum_{l=0}^\infty\zeta^{-2d+2-2l}\sigma_l(r),
\end{equation}
where
\begin{equation}
\sigma_l(r)=(4\pi)^{-1}\frac{\Gamma(d+l-1)}{(d-1)!}r^{2d-1+2l}\int_{S^1}u_l(r,\theta)d\theta.
\end{equation}

As a consequence of Proposition~\ref{general resolvent expansion}, we obtain

\begin{prop}\label{proposition resolvent trace expansion}
Let $\varphi(r)$ be a smooth function with compact support in $\mathbb{R}$ and $\varphi=1$ near $r=0$. We have the following expansion as $z\to\infty$
\begin{equation}
\begin{split}
\tr\left(\varphi(r)(\Delta+z^2)^{-d}\right)
\sim
\sum_{l=0}^\infty a^\rho_l z^{-2d+2-2l}
+\sum_{l=0}^\infty b^\rho_l z^{-2d-l}
+\sum_{l=0}^\infty c^\rho_l z^{-2d-2l}\log z,
\end{split}
\end{equation}
where the interior terms
\begin{align}\label{interior resolvent terms}
a^\rho_l
=(4\pi)^{-1}\frac{\Gamma(d+l-1)}{(d-1)!}\fint_0^{\infty}\varphi(r)r^{-2d+2-2l}\int_{S^1}u_l(r,\theta)d\theta dr,
\end{align}
the singular terms
\begin{align}\label{singular resolvent terms}
b^\rho_l
=\frac{1}{(2d-1+l)!}\fint_0^{\infty}\zeta^{2d-1+l}\partial^{2d-1+l}_r\left(\sigma(r,\zeta)\right)|_{r=0}d\zeta,
\end{align}
the logarithmic terms
\begin{align}\label{logarithmic resolvent terms}
c^\rho_l
=(4\pi)^{-1}\frac{\Gamma(d+l-1)}{(d-1)!(2d-3+2l)!}\partial^{2d-3+2l}_r\left(\int_{S^1}u_{l}(r,\theta)d\theta\right)|_{r=0}.
\end{align}
In particular, $a^\rho_0=\Vol(M)$, $c^\rho_0=0$, $c^\rho_1=\frac{d}{30}\text{Res}_1\frac{(f'')^2}{f}(0)$.
\end{prop}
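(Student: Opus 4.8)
The plan is to derive Proposition~\ref{proposition resolvent trace expansion} as a direct specialization of Proposition~\ref{general resolvent expansion} to the operator $\Delta$ written in the regular-singular form $-\partial_r^2 + r^{-2}A(r)$, and then to compute the three asserted special values. The only structural difference between the two propositions is that the local heat expansion of Theorem~\ref{local_heat_expansion} produces only integer powers $t^{i-1}$, hence only even powers $\zeta^{-2d+2-2l}$ in $\sigma(r,\zeta)$; this is why the interior and logarithmic sums here are indexed by $2l$ rather than $l$. So first I would substitute the explicit form
$$
\sigma_l(r)=(4\pi)^{-1}\frac{\Gamma(d+l-1)}{(d-1)!}r^{2d-1+2l}\int_{S^1}u_l(r,\theta)\,d\theta
$$
into the formulas for $a_l^\rho$ and $c_l^\rho$ from Proposition~\ref{general resolvent expansion}, absorbing the constant $(4\pi)^{-1}\Gamma(d+l-1)/(d-1)!$ and noting that the $r^{2d-1+2l}$ factor in $\sigma_l$ combines with the $r^{-2d+2-l}$ weight in $a_l^\rho$ to give $r^{-2d+2-2l}$ after reindexing the spacing; the singular terms $b_l^\rho$ carry over verbatim since they are expressed through $\sigma(r,\zeta)$ directly. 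This gives (\ref{interior resolvent terms})--(\ref{logarithmic resolvent terms}).

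Next I would establish $a_0^\rho=\Vol(M)$. Since $u_0\equiv 1$, the local contribution of the resolvent kernel near $r=0$ integrates, via the $\fint$ regularization, to the finite part of $\int\varphi(r)f(r)\,dr\,d\theta$, and combined with the contribution away from the singularity (which is the genuinely convergent part of the volume integral) the regularized sum reproduces $\Vol(M)$; this mirrors the corresponding statement in \cite{S2} and uses that the $\fint$ of $\int_0^\infty\varphi(r)r\,dr$-type integrals picks out exactly the would-be-divergent volume piece with the right sign. For $c_0^\rho$: the formula gives $c_0^\rho=(4\pi)^{-1}\frac{1}{(2d-3)!}\partial_r^{2d-3}\big(\int_{S^1}u_0\,d\theta\big)|_{r=0}$, and since $u_0\equiv 1$ is constant in $r$, every derivative of positive order vanishes, so $c_0^\rho=0$ (for $d>1$ the order $2d-3$ is positive, or one interprets the Hadamard finite part appropriately).

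The substantive computation is $c_1^\rho=\frac{d}{30}\,\Res_1\frac{(f'')^2}{f}(0)$. Here I would use the known formula for the second heat coefficient of a surface, $u_1=\tfrac16\Scal$, so that $\int_{S^1}u_1(r,\theta)\,d\theta=\tfrac13\pi\,\Scal$ evaluated on the cone; for the metric $dr^2+f(r)^2d\theta^2$ the Gaussian curvature is $-f''/f$, hence $\Scal=-2f''/f$ and $\int_{S^1}u_1\,d\theta=-\tfrac{2\pi}{3}\cdot\frac{f''}{f}$ — but one must be careful, because after the unitary conjugation by $f^{1/2}$ the relevant "potential" in $\Delta$ is $\tfrac12 f''/f-\tfrac14(f'/f)^2$, and the coefficient $u_1$ that enters (\ref{logarithmic resolvent terms}) is the heat coefficient of the conjugated operator acting on $\mathbb{R}_+\times S^1$, whose effective potential contributes an extra term. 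So the real task is to compute $\partial_r^{2d-1}$ of $r^{2d+1}\int_{S^1}u_1(r,\theta)\,d\theta$ at $r=0$, recognize the combination $\partial_r^{2d-1}(r^{2d+1}g(r))|_{r=0}$ as proportional to a Hadamard-type residue $\Res_1$ of $g$, and track all the numerical constants ($(4\pi)^{-1}$, $\Gamma(d)/(d-1)!=1$ hmm actually $\Gamma(d+l-1)/(d-1)!$ at $l=1$ equals $1$, the factorial $(2d-1)!$, and the combinatorial factor from differentiating $r^{2d+1}$) down to the clean coefficient $d/30$. The main obstacle I anticipate is precisely this constant-chasing together with correctly identifying which potential's heat coefficient appears; the appearance of $(f'')^2/f$ rather than $f''/f$ strongly suggests that the relevant contribution comes not from $u_1$ of the surface metric but from a higher-order interplay in the effective one-dimensional problem, so I would cross-check against \cite[Proposition~4.14]{BL} and the analogous residue computations in \cite{BS2,S1} rather than re-deriving the surface heat invariant naively.
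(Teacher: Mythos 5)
Your overall route is exactly the paper's: Proposition~\ref{proposition resolvent trace expansion} is presented there literally ``as a consequence of Proposition~\ref{general resolvent expansion}'', with no further written proof, and your specialization --- only even powers $\zeta^{-2d+2-2l}$ occur in $\sigma(r,\zeta)$ because the local heat expansion of Theorem~\ref{local_heat_expansion} has only integer powers of $t$, the $b^\rho_l$ carry over verbatim, and the $a^\rho_l$, $c^\rho_l$ are obtained by inserting the explicit $\sigma_l(r)$ --- is the intended argument. Two caveats. First, your bookkeeping of the powers of $r$ is asserted rather than checked: combining the weight $r^{-2d+2-2l}$ with the factor $r^{2d-1+2l}$ sitting inside $\sigma_l(r)$ gives $r^{1}$, not the weight $r^{-2d+2-2l}$ appearing in (\ref{interior resolvent terms}), so the claim that these ``combine \ldots to give $r^{-2d+2-2l}$'' does not literally hold; the displayed formula for $a^\rho_l$ uses $\int_{S^1}u_l\,d\theta$ with the weight kept, i.e.\ $\sigma_l$ with its $r$-power stripped off, and you should say explicitly which normalization of $\sigma_l$ you are substituting (the paper is itself ambiguous here, but a proof has to commit to one convention and be consistent with the scaling property (\ref{scaling property})).

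Second, and more substantively, you do not prove $c^\rho_1=\frac{d}{30}\Res_1\frac{(f'')^2}{f}(0)$. You correctly notice the tension --- the naive surface heat invariant $u_1=\frac16\Scal$ with $\Scal=-2f''/f$ would produce something linear in $f''/f$, whereas the claimed answer is quadratic in $f''$ --- but you resolve it only by proposing to ``cross-check against the literature,'' which is not a proof. The missing ingredient is that $\int_{S^1}u_1(r,\theta)\,d\theta$ is singular as $r\to0$ (it behaves like $f''(0)/(f'(0)r)$ plus regular terms), so $\partial_r^{2d-1}(\cdot)|_{r=0}$ in (\ref{logarithmic resolvent terms}) must be read as a regularized (finite-part) evaluation, and the coefficient that survives is governed by the residue of the relevant Laurent expansion at $r=0$; identifying which term of that expansion produces $(f'')^2/f$ and chasing the constant to $d/30$ is the entire content of the claim, and it is absent from your write-up. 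To be fair, the paper also states this value without proof and does not use it in Theorem~\ref{heat trace on surface}, but as a standalone proof of the proposition as stated, this item is a genuine gap. Your treatments of $a^\rho_0=\Vol(M)$ (local finite part plus the interior contribution) and of $c^\rho_0=0$ (from $u_0\equiv1$ and $2d-3\ge1$) are fine.
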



We denote by $\alpha$ the angle between the axis of the cone and the line tangent to the surface of the cone at the tip of the cone, i.e. $\tan\alpha=f'(0)$. We denote by $\kappa:=f''(0)$ the curvature of the generating curve near the tip of the cone.

\begin{prop}\label{constant term}
We have
$$
b^\rho_0
=\frac{1}{12}\left(\frac{1}{\sin\alpha}-\sin\alpha\right).
$$
\end{prop}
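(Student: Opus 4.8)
The plan is to compute $b^\rho_0$ directly from formula (\ref{singular resolvent terms}) with $l=0$, namely
$$
b^\rho_0=\frac{1}{(2d-1)!}\fint_0^\infty \zeta^{2d-1}\partial_r^{2d-1}\bigl(\sigma(r,\zeta)\bigr)\big|_{r=0}\,d\zeta,
$$
and the essential point is that only the leading symbol of $A(r)$ at $r=0$ contributes. First I would recall from the previous section that $\Delta=-\partial_r^2+r^{-2}A(r)$ with $A(0)=-\frac{1}{(f'(0))^2}\partial_\theta^2-\frac14=-\cot^2\alpha\,\partial_\theta^2-\frac14$ (using $\tan\alpha=f'(0)$, so $\frac{1}{(f'(0))^2}=\cot^2\alpha$). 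Since differentiating $\sigma(r,\zeta)$ in $r$ a total of $2d-1$ times and then setting $r=0$ only sees the value $A(0)$ — the higher Taylor coefficients of $A(r)$ produce extra powers of $r$ that vanish under $r=0$ after the appropriate number of derivatives, or rather contribute only to $b^\rho_l$ with $l\ge1$ — the computation reduces to the model operator with constant coefficient $A(0)$.

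Next I would diagonalize $A(0)$ over the Fourier basis $e^{in\theta}$ on $S^1$: on the $n$-th mode, $-\partial_\theta^2$ acts as $n^2$, so $A(0)$ acts as $\cot^2\alpha\cdot n^2-\frac14$. Writing this as $\nu_n^2-\frac14$ with $\nu_n=|n|\cot\alpha=|n|/f'(0)$, the operator $\Delta$ restricted to the $n$-th mode becomes exactly the Bessel-type operator $T_{\nu_n}=-\partial_r^2+r^{-2}(\nu_n^2-\frac14)$ of Proposition~\ref{one dimensional kernel}. Therefore $\sigma(r,\zeta)$ for the model operator is $\sum_{n\in\mathbb{Z}}(T_{\nu_n}+\zeta^2/r^2)^{-d}(r,r)$, and I can express $b^\rho_0$ via the Mellin-type integral in Proposition~\ref{one dimensional kernel}. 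Concretely, the scaling and the structure of (\ref{singular resolvent terms}) should let me identify $b^\rho_0$, after summing over $n$, with the value of a meromorphic function built from $\sum_n \Gamma$-quotients of the form appearing in Proposition~\ref{one dimensional kernel}, evaluated at the relevant argument $s$; taking $d$ large enough that all the formal manipulations converge, and then noting the answer is independent of $d$.

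The main obstacle I expect is the bookkeeping in converting the $\zeta$-integral and $r$-derivatives in (\ref{singular resolvent terms}) into the Mellin transform $\fint_0^\infty r^s(T_\nu+1)^{-d-1}(r,r)\,dr$ from Proposition~\ref{one dimensional kernel}, keeping track of shifts in $d$ and the precise value of $s$, and then performing the sum over $n\in\mathbb{Z}$ of the resulting product of Gamma functions. The sum will be of the shape $\sum_{n\ge1}\bigl(\text{Gamma quotient in } \nu_n=n\cot\alpha\bigr)$; for the particular $s$ that arises, this Gamma quotient should collapse to an elementary rational function of $\nu_n$ (e.g. proportional to $\nu_n^{-1}-\text{const}\cdot\nu_n$ or similar), and evaluating the resulting sum — a combination of $\zeta(-1)=-\frac{1}{12}$ type values and a $\zeta(1)$-type term that must cancel — will produce $\frac{1}{12}(\frac{1}{\sin\alpha}-\sin\alpha)$. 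The delicate parts are justifying the regularized-sum manipulations and checking the cancellation of the divergent $n$-sum, together with confirming that the $n=0$ mode (where $\nu_0=0$) is handled correctly by the Friedrichs extension and contributes nothing singular. I would close by noting this matches the known flat-cone answer: when $\alpha=\pi/2$, i.e. $f'(0)=1$, one gets $b^\rho_0=0$, consistent with \cite{S2}.
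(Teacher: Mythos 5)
Your reduction to the model operator is correct and worth making explicit: by the scaling property (\ref{scaling property}), $\partial_r^{2d-1}\sigma(r,\zeta)|_{r=0}=(2d-1)!\,[\tr_H(L_0+\zeta^2)^{-d}](1,1)$, so indeed only $A(0)$ enters $b^\rho_0$, and the Fourier decomposition into the operators $T_{\nu_n}$ with $\nu_n=|n|/f'(0)$ is the right model. Note that the paper itself does not carry out this computation at all — it simply cites \cite[p.~424]{BS2} — so what you propose is a genuinely self-contained route, essentially the one used for $b^\rho_1$ in Proposition~\ref{b_1}.

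However, as sketched the computation does not close, and the gap is precisely at the step you wave at. Carrying out your plan: homogeneity of $T_\nu$ gives $\zeta^{2d-1}(T_\nu+\zeta^2)^{-d}(1,1)=(T_\nu+1)^{-d}(\zeta,\zeta)$, so the $n$-th mode contributes $\fint_0^\infty(T_{\nu_n}+1)^{-d}(\zeta,\zeta)\,d\zeta$, which is the Mellin transform of Proposition~\ref{one dimensional kernel} (with $d$ shifted by one) continued to $s=0$. There the Gamma quotient collapses to exactly $-\nu_n/2$ — purely linear in $\nu_n$, not of the shape $\nu_n^{-1}-\text{const}\cdot\nu_n$ that you predict. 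Zeta-regularizing $\sum_{n\in\mathbb Z}(-\nu_n/2)=-\frac{1}{f'(0)}\sum_{n\ge1}n$ then yields only $\frac{1}{12\,f'(0)}$ and can never produce the second term $-\frac{f'(0)}{12}$. That term is not an artifact of the per-mode summand: it arises because the regularized integral $\fint d\zeta$ in (\ref{singular resolvent terms}) is taken with respect to the asymptotic expansion of the \emph{full} trace $\sum_n(T_{\nu_n}+1)^{-d}(\zeta,\zeta)$, and this $\fint$ does not commute with the infinite mode sum; the discrepancy (controlled by the uniform large-$\nu$ asymptotics of $I_\nu K_\nu$, as in \cite{BS2}) is exactly what supplies the missing piece. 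Without identifying and computing this commutation defect your argument proves the wrong value. A secondary point: your final consistency check ``$\alpha=\pi/2$, i.e.\ $f'(0)=1$'' contradicts the convention $\tan\alpha=f'(0)$ that you (and the paper) adopt; the stated formula $\frac{1}{12}(\frac{1}{\sin\alpha}-\sin\alpha)$ is the classical $\frac{1}{12}(\frac{1}{f'(0)}-f'(0))$ only under $\sin\alpha=f'(0)$, so you should fix one convention before matching constants.
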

\begin{proof}
See \cite[p.~424]{BS2}.
\end{proof}


\begin{prop}\label{b_1}
We have
$$
b^\rho_1
=\kappa\cot\alpha
\frac{5\Gamma(d+\frac{1}{2})}{96\sqrt{\pi}(d-1)!}.
$$
\end{prop}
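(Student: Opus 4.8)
The plan is to apply Proposition~\ref{proposition resolvent trace expansion} with $l=1$, so that $b^\rho_1=\frac{1}{(2d)!}\fint_0^\infty\zeta^{2d}\,\partial_r^{2d}\sigma(r,\zeta)|_{r=0}\,d\zeta$, and then to identify which Taylor coefficients of the potential $A$ actually enter. By the scaling property (\ref{scaling property}), $\sigma(r,\zeta)=r^{2d-1}\varphi(r)\,[\tr_H(L_r+\zeta^2)^{-d}](1,1)$ near $r=0$, and $[\tr_H(L_r+\zeta^2)^{-d}](1,1)$ is smooth in $r$ there; writing its Taylor series $\Psi_0(\zeta)+r\Psi_1(\zeta)+\dots$ and applying the Leibniz rule, only the term pairing $\partial_r^{2d-1}(r^{2d-1})$ with $\partial_r$ of the smooth factor survives at $r=0$ (the same mechanism used in the proof of Proposition~\ref{general resolvent expansion}), so that $b^\rho_1=\fint_0^\infty\zeta^{2d}\Psi_1(\zeta)\,d\zeta$. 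Since, in coordinates, $L_r=-\partial_s^2+s^{-2}A(rs)=\big(-\partial_s^2+s^{-2}A(0)\big)+r\,s^{-1}A'(0)+O(r^2)$, the coefficient $\Psi_1$ is the first-order perturbation of the diagonal resolvent kernel of $L_0=-\partial_s^2+s^{-2}A(0)$ by the multiplication operator $s^{-1}A'(0)$; in particular $b^\rho_1$ depends only on $A(0)$ and $A'(0)$.

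Next I would diagonalise over the Fourier modes $e^{in\theta}$ on $S^1$. On the $n$-th mode $A(0)$ acts as $\nu_n^2-\tfrac14$ with $\nu_n=|n|\cot\alpha$ (using $f'(0)=\tan\alpha$), so $L_0$ restricts to the model operator $T_{\nu_n}$ of Proposition~\ref{one dimensional kernel}. From (\ref{A'(0)}) together with the formula for $A(0)$ one checks the identity $A'(0)=-\kappa\cot\alpha\,(A(0)+\tfrac12)$, so on the $n$-th mode the perturbation $s^{-1}A'(0)$ is multiplication by $-\kappa\cot\alpha\,(\nu_n^2+\tfrac14)\,s^{-1}$, i.e.\ a Coulomb-type potential. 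Standard first-order perturbation theory for the $d$-th power of the resolvent then yields $\Psi_1(\zeta)=-\kappa\cot\alpha\sum_n(\nu_n^2+\tfrac14)\,\partial_\epsilon|_{\epsilon=0}[(T_{\nu_n}+\epsilon s^{-1}+\zeta^2)^{-d}](1,1)$.

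For each fixed $n$ the quantity $\partial_\epsilon|_{\epsilon=0}[(T_\nu+\epsilon s^{-1}+\zeta^2)^{-d}](1,1)$ must now be computed, and there are two natural routes. Route (i): write the Green's function of the Coulomb operator $-\partial_s^2+s^{-2}(\nu^2-\tfrac14)+\epsilon s^{-1}+\zeta^2$ via the Whittaker functions $M_{-\epsilon/2,\nu}(2\zeta s)$, $W_{-\epsilon/2,\nu}(2\zeta s)$ with Wronskian normalisation $\propto\Gamma(\nu+\tfrac12+\tfrac\epsilon2)/\Gamma(2\nu+1)$, pass to the $d$-th power by $\tfrac1{(d-1)!}(-\partial_{\zeta^2})^{d-1}$, and differentiate at $\epsilon=0$, where $M_{0,\nu},W_{0,\nu}$ collapse to $I_\nu,K_\nu$ and only the known $\partial_\kappa$-derivatives of Whittaker functions at $\kappa=0$ are needed. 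Route (ii): use the scaling covariance of $T_\nu$ (conjugation by $u(s)\mapsto\lambda^{1/2}u(\lambda s)$) to absorb $\zeta$ and, after interchanging $\partial_\epsilon$ with the integral, reduce by cyclicity of the regularised fibre-diagonal trace to $-d\fint_0^\infty r^{-1}(T_\nu+1)^{-d-1}(r,r)\,dr$, which is exactly the integral of Proposition~\ref{one dimensional kernel} at the exponent $s=-1$ — a point where the Gamma-quotient has a simple pole, the required finite part being an explicit expression in $\psi(\nu+\tfrac12)$. Either way the per-mode contribution comes out as an elementary function of $\nu$ times $\Gamma(d+\tfrac12)/(d-1)!$, which one then feeds into the Singular Asymptotics Lemma bookkeeping exactly as in the derivation of $b^\rho_0$ in \cite[p.~424]{BS2}.

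The last step is the resummation $b^\rho_1=-\kappa\cot\alpha\sum_{n\in\mathbb{Z}}(\nu_n^2+\tfrac14)\,G_{\nu_n}$ over the modes, with $\nu_n=|n|\cot\alpha$, understood through Riemann/Hurwitz zeta-regularisation as for $b^\rho_0$; the $\alpha$-dependence of the summand must then collapse (via $\zeta(0)=-\tfrac12$, $\zeta(-2)=0$ and the corresponding $\zeta'$-values) so that only the overall factor $\cot\alpha$ survives and the numerical constant works out to $5/96$. I expect the genuine obstacle to lie precisely here: not in any single model computation, but in tracking the partie-finie/cutoff prescriptions faithfully through the scaling steps — the mode-sum and the $\zeta$-regularisation do not commute naively, which is already the subtle point behind $b^\rho_0$ — and in computing $G_\nu$ accurately enough (not merely its leading large-$\nu$ behaviour) that the regularised sum produces the stated closed form. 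One should also verify separately that the finitely many low modes with $\nu_n<1$, where $T_{\nu_n}$ admits several self-adjoint realisations, contribute through the same formula under the Friedrichs choice.
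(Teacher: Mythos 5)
Your structural reduction is exactly the paper's: from (\ref{singular resolvent terms}) with $l=1$, the Leibniz argument on $r^{2d-1}\varphi(r)\left[\tr_H(L_r+\zeta^2)^{-d}\right](1,1)$ isolates the first-order Taylor coefficient in $r$, which by first-order perturbation theory is governed by $A'(0)$ alone; the identity $A'(0)=-\kappa\cot\alpha\,(A(0)+\tfrac12)$ you state is correct and is precisely what the paper uses via (\ref{A'(0)}); and your route (ii) --- reduce, by cyclicity and scaling, to $-d$ times the regularised integral $\fint_0^\infty r^{-1}(T_\nu+1)^{-d-1}(r,r)\,dr$, i.e.\ Proposition~\ref{one dimensional kernel} evaluated at $s=-1$, followed by a zeta-regularised mode sum --- is literally the computation carried out in the paper. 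So the approach is sound and coincides with the paper's; your remark that the low modes must be checked against the Friedrichs choice is a fair additional point.

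The gap is that you stop exactly where the content of the proposition begins: you never produce the per-mode value $G_\nu$ nor evaluate the regularised sum, and you predict that the difficulty lies in a delicate cancellation among $\psi(\nu+\tfrac12)$'s and $\zeta'$-values. In fact the Mellin formula of Proposition~\ref{one dimensional kernel} degenerates at $s=-1$: the $\nu$-dependent factor is $\Gamma(\nu+1+\tfrac{s}{2})/\Gamma(\nu-\tfrac{s}{2})=\Gamma(\nu+\tfrac12)/\Gamma(\nu+\tfrac12)=1$ for \emph{every} $\nu$, so (after the finite-part prescription handling the simple pole of $\Gamma(-\tfrac12-\tfrac{s}{2})$, which the paper absorbs into its $\Res_0|_{s=-1}$ convention) the per-mode contribution is a $\nu$-independent multiple of $\Gamma(d+\tfrac12)/\bigl(4\sqrt{\pi}\,(d-1)!\bigr)$, and the mode sum is just the regularisation of $2\sum_{k\ge1}\bigl(-k^2\cot^2\alpha-\tfrac14\bigr)-\tfrac14$; the $\cot^2\alpha$ part dies because $\zeta(-2)=0$, and the remaining numerology yields $-\tfrac{5}{24}$ and hence $5/96$. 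Your instinct that the regularisation prescription is the sensitive point is nonetheless well placed --- the answer changes if one assigns $\zeta(0)$ rather than $\zeta(-1)$ to $\sum_{k\ge1}1$, and the paper's result hinges on its specific partie-finie convention --- but since you neither fix a prescription nor evaluate anything, the proposal does not yet establish the constant $5/96$, which is the entire assertion of the proposition.
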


\begin{proof}

Since
$$
\frac{d}{dr}(\Delta+1)^{-1}=-(\Delta+1)^{-1}r^{-1}A'(r)(\Delta+1)^{-1},
$$
we have
\begin{align}\label{derivative}
\tr_{L^2(\mathbb{R},L^2(S^1))}\left(\frac{d}{dr}(\Delta+1)^{-d}\right)
=-d\tr_{L^2(\mathbb{R},L^2(S^1))}\left(r^{-1}A'(0)(\Delta+1)^{-d-1}|_{r=0}\right).
\end{align}
By (\ref{singular resolvent terms}),

\begin{align*}
b^\rho_1
=&\frac{1}{(2d)!}\fint_0^{\infty}\zeta^{2d}\partial^{2d}_r\left(\tr_{L^2(\mathbb{R},L^2(S^1))}(\Delta+\zeta^2/r^2)^{-d}(r,r)\right)|_{r=0}d\zeta,
\end{align*}
by the scaling property (\ref{scaling property}) and (\ref{derivative}),

\begin{align*}
b^\rho_1
=&\partial_r\tr_{L^2(\mathbb{R},L^2(S^1))}(\Delta+1)^{-d}(r,r)|_{r=0}\\
=&-d\tr_{L^2(\mathbb{R},L^2(S^1))}\left(r^{-1}A'(0)(\Delta+1)^{-d-1}|_{r=0}\right).
\end{align*}

We use (\ref{A'(0)}) to compute further

\begin{align*}
b^\rho_1
=&-d\tr_{L^2(\mathbb{R},L^2(S^1))}\left(r^{-1}\left(\frac{f''(0)}{(f'(0))^3}\partial^2_\theta-\frac14\frac{f''(0)}{f'(0)}\right)(\Delta+1)^{-d-1}|_{r=0}\right)\\
=&-d\frac{f''(0)}{f'(0)}\Res_0|_{s=-1}\tr_{L^2(\mathbb{R},L^2(S^1))}\left(r^s\left(\frac{1}{(f'(0))^2}\partial^2_\theta-\frac14\right)(\Delta+1)^{-d-1}|_{r=0}\right)\\
=&-d\frac{f''(0)}{f'(0)}\Res_0|_{s=-1}F(s).
\end{align*}
Above $\Res_0|_{s=-1}F(s)$ denotes the regular analytic continuation at $s=-1$ of the function

\begin{align*}
F(s)
:=&\tr_{L^2(\mathbb{R},L^2(S^1))}\left(r^s\left(\frac{1}{(f'(0))^2}\partial^2_\theta-\frac14\right)(\Delta+1)^{-d-1}|_{r=0}\right)\\
=&2\sum_{k=1}^{\infty}\left(-\frac{1}{(f'(0))^2}k^2-\frac14\right)\tr_{L^2(\mathbb{R},L^2(S^1))}\left(r^s(T_k+1)^{-d-1}|_{r=0}\right)\\
&-\frac14\tr_{L^2(\mathbb{R},L^2(S^1))}\left(r^s(T_0+1)^{-d-1}|_{r=0}\right),
\end{align*}
where $T_\nu:=-\partial_r^2+r^{-2}\left(\nu^2-\frac14\right)$ is an operator in $L^2(\mathbb{R})$.

By Proposition~\ref{one dimensional kernel}, we have

\begin{align*}
F(s)
=&2\sum_{k=1}^{\infty}\left(-\frac{1}{(f'(0))^2}k^2-\frac14\right)\frac{\Gamma(d+1+\frac{s}{2})\Gamma(-\frac12-\frac{s}{2})\Gamma(k+1+\frac{s}{2})}{4\sqrt{\pi}d!\Gamma(k-\frac{s}{2})}\\
&-\frac14\frac{\Gamma(d+1+\frac{s}{2})\Gamma(-\frac12-\frac{s}{2})\Gamma(1+\frac{s}{2})}{4\sqrt{\pi}d!\Gamma(-\frac{s}{2})}\\
=&\Bigg(2\sum_{k=1}^{\infty}\left(-\frac{1}{(f'(0))^2}k^2-\frac14\right)\frac{\Gamma(k+1+\frac{s}{2})}{\Gamma(k-\frac{s}{2})}\\
&-\frac14\frac{\Gamma(1+\frac{s}{2})}{\Gamma(-\frac{s}{2})}\Bigg)
\frac{\Gamma(d+1+\frac{s}{2})\Gamma(-\frac12-\frac{s}{2})}{4\sqrt{\pi}d!},
\end{align*}
hence

\begin{align*}
b^\rho_1
=&-d\frac{f''(0)}{f'(0)}
\Bigg(2\sum_{k=1}^{\infty}\left(-\frac{1}{(f'(0))^2}k^2-\frac14\right)\frac{\Gamma(k+\frac{1}{2})}{\Gamma(k+\frac{1}{2})}
-\frac14\frac{\Gamma(\frac{1}{2})}{\Gamma(\frac{1}{2})}\Bigg)
\frac{\Gamma(d+\frac{1}{2})}{4\sqrt{\pi}d!}\\
=&-\frac{f''(0)}{f'(0)}
\Bigg(2\sum_{k=1}^{\infty}\left(-\frac{1}{(f'(0))^2}k^2-\frac14\right)
-\frac14\Bigg)
\frac{\Gamma(d+\frac{1}{2})}{4\sqrt{\pi}(d-1)!}\\
=&-\frac{f''(0)}{f'(0)}
\Bigg(2\left(-\frac{1}{(f'(0))^2}\zeta(-2)-\frac14\zeta(-1)\right)
-\frac14\Bigg)
\frac{\Gamma(d+\frac{1}{2})}{4\sqrt{\pi}(d-1)!}\\
=&-\frac{f''(0)}{f'(0)}
\Bigg(\frac{1}{24}
-\frac14\Bigg)
\frac{\Gamma(d+\frac{1}{2})}{4\sqrt{\pi}(d-1)!}\\
=&\frac{f''(0)}{f'(0)}
\frac{5\Gamma(d+\frac{1}{2})}{96\sqrt{\pi}(d-1)!}.
\end{align*}
Above $\zeta(s)$ is the Riemann zeta function.

Use $f''(0)=\kappa$ and $f'(0)=\tan\alpha$ to obtain the final formula
$$
b^\rho_1
=\kappa\cot\alpha
\frac{5\Gamma(d+\frac{1}{2})}{96\sqrt{\pi}(d-1)!}.
$$
\end{proof}

\begin{proof}[Proof of Theorem~\ref{heat trace on surface}]
To proof the theorem we choose a partition of unity $\varphi_i, \psi$ such that $\varphi_i$ is supported near the $i$-th singularity and $\psi$ is supported away from the singularities. Then we consider
$$
\tr(\Delta+z^2)^{-d}=\sum_{i=1}^k\tr\left(\varphi_i(r)(\Delta+z^2)^{-d}\right)+\tr\left(\psi (\Delta+z^2)^{-d}\right).
$$
By Proposition~\ref{resolvent trace expansion}, we compute the coefficients in the expansion near each singularity, $\tr\left(\varphi_i(r)(\Delta+z^2)^{-d}\right)$. The coefficients away from the singularity are computed using the interior resolvent kernel expansion. Then we combine Proposition~\ref{proposition resolvent trace expansion} with Proposition~\ref{constant term} and Proposition~\ref{b_1}. We use $a_l=\frac{(d-1)!}{\Gamma(d-1+l)}a_l^\rho$ and $b_{l/2}=\frac{(d-1)!}{\Gamma(d+l/2)}b_l^\rho$ and $c_l=\frac{(d-1)!}{\Gamma(d+l)}c_l^\rho$ to obtain the coefficients in the heat kernel expansion from the coefficients in the resolvent kernel expansion. This finishes the proof.
\end{proof}


\begin{thebibliography} {3}
\bibitem[BGM]{BGM} M.\,Berger, P.\,Gauduchon, E.\,Mazet, {\em Le spectre d'une vari\'et\'e riemannienne}, Lect. Notes Math. 194, Springer-Verlag, Berlin-Heidelberg-New\;York (1971).
\bibitem[BL]{BL} J.\,Br\"uning, M.\,Lesch, {\itshape On the spectral geometry of algebraic curves}, J.\,reine angew. Math., 474 (1996), 25--66.
\bibitem[BS]{BS} J.\,Br\"uning, R.\,Seeley, {\em An index theorem for first order regular singular operators}, Am. J. Math. 110 (1988), 659--714.
\bibitem[BS2]{BS2} J.\,Br\"uning, R.\,Seeley, {\em The resolvent expansion for second order regular singular operators}, J.\,Funct. Anal. 73 (1987), 369--429.
\bibitem[Ch]{Ch} J.\,Cheeger, {\em Spectral geometry of singular Riemannian spaces}, J. Differential Geom. 18 (1983), no. 4, 575--657.
\bibitem[O]{O} F.\,Oberhettinger, {\em Tables of Mellin transforms}, Springer-Verlag, Berlin (1974).
\bibitem[S1]{S1} A.\,Suleymanova, {\em Heat trace expansion on manifolds with conic singularities}, Preprint, \href{https://arxiv.org/abs/1701.01874}{arXiv:1701.01874} [math.SP] (2017)
\bibitem[S2]{S2} A.\,Suleymanova, {\em On the spectral geometry of manifolds with conic singularities}, Preprint, \href{https://arxiv.org/abs/1710.05355}{arXiv:1710.05355} [math.SP] (2017).
\end{thebibliography}
\end{document}